 \theoremstyle{definition}
 \newtheorem{theorem}{Theorem}[section]%
 \newtheorem{lemma}[theorem]{Lemma}
\numberwithin{equation}{section}
\theoremstyle{remark}
\newcommand{\abs}[1]{\left|#1\right|}
\newcommand{\be}{\begin{equation}}
\newcommand{\ee}{\end{equation}}
\newcommand\R{\mathbb R}
\newcommand\N{\mathbb N}
\newcommand\Z{\mathbb Z}
\newcommand{\T}{\mathbb{T}}
\newcommand{\Sn}{\mathbb{S}^n}
\newcommand{\Tn}{\mathbb{T}^n}
\newcommand\cube{\mathcal C}
\newcommand\Wp{\mathcal{W}_p}
\newcommand\Wt{\mathcal{W}_2}
\newcommand\Wo{\mathcal{W}_1}
\newcommand\dwpp{d_{\Wp}^p}
\newcommand\dwp{d_{\Wp}}
\newcommand\dwt{d_{\Wt}}
\newcommand{\isom}{\mathrm{Isom}}
\newcommand{\prob}{\mathcal{P}}
\newcommand{\WpSg}{\Wp(\Sn)}
\newcommand{\WpTd}{\Wp(\Tn)}
\newcommand{\WoTd}{\Wo(\Tn)}
\newcommand{\WtTd}{\Wt(\Tn)}
\newcommand{\ler}[1]{\left( #1 \right)}
\newcommand{\dd}{\mathrm{d}}
\newcommand{\inner}[2]{\left< #1,#2 \right>}
\newcommand{\norm}[1]{\left|\left|#1\right|\right|}
\newcommand{\potmu}{\mathcal{T}_\mu^p}
\newcommand{\potmuo}{\mathcal{T}_\mu^1}
\newcommand{\potmut}{\mathcal{T}_\mu^2}
\newcommand{\potnu}{\mathcal{T}_\nu^p}
\newcommand{\potphimu}{\mathcal{T}_{\Phi(\mu)}^p}
\newcommand{\proj}{\mathfrak{p}}
\title[Isometric rigidity of Wasserstein tori and spheres]{Isometric rigidity of Wasserstein tori and spheres}                                                                                            
\author[Gy\"orgy P\'al Geh\'er]{Gy\"orgy P\'al Geh\'er}
\address{Gy\"orgy P\'al Geh\'er, Department of Mathematics and Statistics\\ University of Reading\\ Whiteknights\\ P.O.
Box 220\\ Reading RG6 6AX\\ United Kingdom}
\email{gehergyuri@gmail.com}
\author[Tam\'as Titkos]{Tam\'as Titkos}
\address{Tam\'as Titkos, Alfr\'ed R\'enyi Institute of Mathematics\\ Re\'altanoda u. 13-15.\\
Budapest H-1053\\ Hungary\\ and BBS University of Applied Sciences\\ Alkotm\'any u. 9.\\
Budapest H-1054\\ Hungary}
\email{titkos.tamas@renyi.hu}
\author[D\'aniel Virosztek]{D\'aniel Virosztek}
\address{D\'aniel Virosztek, Alfr\'ed R\'enyi Institute of Mathematics\\ Re\'altanoda u. 13-15.\\Budapest H-1053\\ Hungary}
\email{virosztek.daniel@renyi.hu}
\begin{document}
\subjclass{Primary: 54E40 Secondary: 46E27; 544E70}

\keywords{Wasserstein space, Wasserstein torus, Wasserstein sphere, isometries, isometric rigidity}

\thanks{Geh\'er was supported by the Leverhulme Trust Early Career Fellowship (ECF-2018-125), and also by the Hungarian National Research, Development and Innovation Office (Grant no. K115383); Titkos was supported by the Hungarian National Research, Development and Innovation Office - NKFIH (grant no. PD128374 and grant no. K115383) and by the János Bolyai Research Scholarship of the Hungarian Academy of Sciences; Virosztek was supported by the Momentum program of the Hungarian Academy of Sciences under grant agreement no. LP2021-15/2021, and partially supported by the Hungarian National Research, Development and Innovation Office - NKFIH (grants no. K124152 and no. KH129601).}

\begin{abstract} We prove isometric rigidity for $p$-Wasserstein spaces over finite-dimensional tori and spheres for all $p$. We present a unified approach to proving rigidity that relies on the robust method of recovering measures from their Wasserstein potentials.
\end{abstract}

\maketitle
\section{Introduction}

Given a metric space $(X,r)$ and a subset $\mathcal{S}\subseteq\mathcal{P}(X)$ of all probability measures, 
one can endow $\mathcal{S}$ with various metrics, depending on what kind of measurement is suitable for the problem under consideration. Here we mention three examples.
\begin{itemize} 
\item[-] The \emph{Kolmogorov-Smirnov metric} $d_{KS}$ on $\mathcal{S}=\mathcal{P}(\mathbb{R})$ is frequently used in statistics to compare a
sample with a reference probability distribution.

\item[-] The \emph{Lévy-Prokhorov metric} $d_{LP}$ plays an important theoretical role in several limit theorems in probability theory. In this case $(X,\varrho)$ is a complete separable metric space and $\mathcal{S}=\mathcal{P}(X)$.

\item[-] The \emph{quadratic Wasserstein metric} $d_{\mathcal{W}_2}$ turned out to be very effective in a wide range of AI applications including pattern recognition and image processing problems. In these applications $(X,r)$ is typically the $n$-dimensional Euclidean space and $\mathcal{S}$ is the collection of all Borel probability measures with finite second moment.
\end{itemize}
In recent years, there has been a considerable interest in the characterization of the above mentioned (and many other) metric spaces of measures, see e.g. \cite{bertrand-kloeckner-2016,DolinarKuzma,DolinarMolnar,Kuiper,LP,JMAA,TAMS,HIL,Kloeckner-2010,Levy,S-R,titkoskissgraf,Virosztek}. In most cases, it turned out that isometries of $\mathcal{S}$ are strongly related to self-maps of the underlying space $X$. Concerning the Kolmogorov-Smirnov distance, Dolinar and Moln\'ar showed in \cite{DolinarMolnar} that there is a one-to-one correspondence between all isometries of $(\mathcal{P}(X),d_{KS})$ and all homeomorphisms of the real line. Concerning the Lévy-Prokhorov metric, the first and the second author showed in \cite{LP} that $\mathcal{P}(X)$ endowed with $d_{LP}$ is more rigid, assuming that $X$ is real separable Banach space: a self-map of $X$ induces an isometry on $(\mathcal{P}(X),d_{LP})$ if and only if it is itself an isometry. In fact, the isometry group of $(X,\|\cdot\|)$ and the isometry group of $(\mathcal{P}(X),d_{LP})$ are isomorphic. This phenomenon is called \emph{isometric rigidity}. The third example above is even more peculiar. Kloeckner showed in \cite{Kloeckner-2010} that the isometry group of the quadratic Wasserstein space over $\mathbb{R}^n$ is much larger than the isometry group of $\mathbb{R}^n$. 

The most recent results which are related to our studies have been presented in \cite{S-R}: Santos-Rodríguez proved isometric rigidity for a very broad class of manifolds. More precisely, he showed that the isometry group of a quadratic Wasserstein space over a closed Riemannian manifold with strictly positive sectional curvature is isomorphic to the isometry group of the underlying manifold. Furthermore, for compact rank one symmetric spaces (CROSSes), Santos-Rodríguez was able to prove isometric rigidity not only for the quadratic case, but for general $p$-Wasserstein spaces with $1<p<\infty$.\\

As the results of \cite{Kloeckner-2010} and \cite{S-R} already indicate, isometric rigidity of $p$-Wasserstein spaces depends in an interesting way both on some characteristics of the underlying space $X$ and on the value of $p$. To stress this phenomenon, we briefly mention one more special case, for more details see \cite{TAMS}: the $p$-Wasserstein space over $\mathbb{R}$ is isometrically rigid if and only if $p\neq2$, while the $p$-Wasserstein space over $[0,1]$ is isometrically rigid if and only if $p\neq1$.\\

This paper aims to offer a unified approach for two compact manifolds as underlying space: the n-dimensional torus and the n-dimensional sphere, regardless of what the value of $p$ is. The key idea -- which could be applicable in other settings as well -- is that all measures can be fully recovered from a function, which we call Wasserstein potential. The main results are \textbf{Theorem \ref{thm:WpTd}} and \textbf{Theorem \ref{thm:WpSg}}, where we prove for all $p\geq 1$ that the isometry group of the $p$-Wasserstein space on the $n$-dimensional torus is isomorphic to the isometry group of the torus, and similarly, the isometry group of the $p$-Wasserstein space over the $n$-dimensional sphere is isomorphic to the isometry group of the sphere. The latter result has been partially covered by Santos-Rodríguez in \cite{S-R}, as the sphere is a compact rank one symmetric space. However, the method presented in \cite{S-R} cannot be extended to the case $p=1$, as $1$-Wasserstein spaces have a more flexible structure: the optimal transport plan between measures is not unique, let
alone the geodesic curve.

\section{The Wasserstein potential of measures}

In this section, we collect all notions which are necessary to our investigations. We also demonstrate via a simple example how useful the Wasserstein potential can be to identify measures and to prove isometric rigidity.\\

Let $(X,r)$ be a complete and separable metric space. We denote by $\mathcal{P}(X)$ the collection of all Borel probability measures on $X$, and by $\mathcal{F}(X)$ the set of all finitely supported measures. Given a measure $\mu\in\mathcal{P}(X)$, the support $S(\mu)$ is the set of all points $x\in X$ for which every open neighbourhood of $x$ has positive measure with respect to $\mu$. A Borel probability measure $\pi$ on $X \times X$ is said to be a \emph{coupling} for $\mu,\nu\in\prob(X)$ if
\begin{equation}\pi\ler{A \times X}=\mu(A)\qquad\mbox{and}\qquad\pi\ler{X \times B}=\nu(B)
\end{equation}
for all Borel sets $A,B\subseteq X$. We denote set of all couplings by $\Pi(\mu,\nu)$. For any $1\leq p<\infty$ one can define the \emph{$p$-Wasserstein space} $\Wp(X)$ as the set of all $\mu\in\prob(X)$ that satisfy 
\begin{equation}\label{pp}\int_X r(x,\hat{x})^p~\mathrm{d}\mu(x)<\infty
\end{equation} for all $\hat{x}\in X$, endowed with the \emph{$p$-Wasserstein distance}
\begin{equation} \label{eq: wasser def}
\dwp\ler{\mu, \nu}:=\ler{\inf_{\pi \in \Pi(\mu, \nu)} \int_{X \times X} r(x,y)^p~\dd \pi(x,y)}^{1/p}.
\end{equation}
This distance measures the minimal effort required to transport one probability measure into another, when the cost of moving mass is the $p$-th power of the distance. For more details on optimal transport and Wasserstein spaces we refer the reader to the comprehensive textbooks of Santambrogio and Villani \cite{Santambrogio,Villani}. 

It is one of the important features of $p$-Wasserstein spaces that $\Wp(X)$ contains an isometric copy of $X$, as the distance between any two Dirac measures equals to the distance of their supporting points, i.e. $\Wp(\delta_x,\delta_y)=r(x,y)$. Furthermore, every measure belonging to $\Wp(X)$ can be approximated by convex combinations of Dirac measures, that is, $\mathcal{F}(X)$ is dense in $\Wp(X)$. (For more details see e.g. Example 6.3 and Theorem 6.16 in \cite{Villani}.)

In this paper we are interested in the structure of \emph{isometries}, that is, distance preserving bijections.  The symbol $\isom(\cdot)$ will always refer to the isometry group of the metric space in question. We denote the \emph{push-forward map} of an isometry $\psi\colon X \rightarrow X$ by
$\psi_\# \colon \Wp(X)\to\Wp(X)$: \begin{equation}\label{pushforward}
\big(\psi_\#(\mu)\big)(A)=\mu(\psi^{-1}[A])
 \end{equation}
for all $A\subseteq X$ and $\mu\in\Wp(X)$, where $\psi^{-1}[A]=\{x\in X\,|\, g(x)\in A\}$. If $p\geq1$, the push-forward operation
\begin{equation}
    \#:\isom(X)\to\isom(\Wp(X));\quad \psi\mapsto\psi_\#
\end{equation}
is an embedding (in fact, a group homomorphism). Those isometries which belong to the image of $\#$ are called \emph{trivial isometries}. We say that $\Wp(X)$ is \emph{isometrically rigid} if $\#$ is surjective. Now we introduce our key tool: for a given $\mu\in\Wp(X)$ the one variable function $\potmu:X\to\mathbb{R}$ defined by
\begin{equation}\label{potdef}
	\potmu(x):=\dwpp(\mu,\delta_x)=\int_{\Tn} r(x,y)^p ~\dd\mu(y).
\end{equation}
is called the \emph{Wasserstein potential of $\mu$}.
We expect that $\mu$ can be fully recovered from this function, and in particular that $\potmu=\potnu$ implies $\mu=\nu$.\\

In this  paper we are going to consider the torus and the sphere as underlying spaces. The symbol $\T^n$ stands for the $n$-dimensional torus, that is, the set $\R^n/\Z^n\simeq[-1/2,1/2)^n$ equipped with the usual metric
\begin{equation}\label{torusmetric}\varrho\ler{x,y}=\ler{\sum_{k=1}^n\abs{\ler{x_k-y_k}_{\text{mod }1}}^2}^{\frac{1}{2}},
\end{equation}
where $x=(x_1,\dots,x_n)\in\R^n$ and $y=(y_1,\dots,y_n)\in \R^n$. The antipodal of a point $(x_1,\dots,x_n)$ in the torus is $(x_1+1/2,\dots,x_n+1/2)$. We denote the unit sphere of $\R^{n+1}$ by the symbol $\Sn$, that is, $\Sn := \{x\in\R^{n+1}\colon \norm{x}=1\}$. We equip the unit sphere with the angular (or geodesic) distance: for $x,y\in\Sn$ the distance of $x$ and $y$ is
\begin{equation}\label{sphericalangle}
\sphericalangle(x,y) := \arccos\inner{x}{y}.
\end{equation}
We say that two points $x$ and $y$ are antipodal in the sphere if $y=-x$.  Adapting Gangbo's and Tudorascu's terminology in \cite{GT}, we will shortly refer to the $p$-Wasserstein spaces over $(\Tn,\varrho)$ and $(\Sn,\sphericalangle)$ as the $p$-Wasserstein torus and the $p$-Wasserstein sphere, respectively.\\

To conclude this chapter, we present one possible way of using Wasserstein potentials. This example will shed some light also on the difficulties that need to be overcome to obtain the desired result for all possible values of p. First of all, when working with the torus, it is a natural idea to borrow techniques from the theory of Fourier analysis.\\

\noindent Given a measure $\mu\in\Wp(\Tn)$, the potential function $\potmu$ can be written for all $x\in\Tn$ as
\begin{equation}\potmu(x)=\int_{\T^n} \varrho^p(x,y) ~\dd \mu(y)= \int_{\T^n}c_p(x-y) ~\dd \mu(y)= \ler{c_p * \mu}(x),
\end{equation}
where $c_p(x)=\ler{\sum_{k=1}^n x_k^2}^{\frac{p}{2}}$. Since the \emph{characters} of $\T^n$
$$
\varphi_j(x)=e^{2 \pi i j \cdot x} \qquad \ler{j=\ler{j_1,j_2, \dots,j_n} \in \Z^n}
$$
form an orthonormal basis of $L^2\ler{\T^n}$, we have
\begin{equation*}
\begin{split}
\hat{\mathcal{T}}_\mu^p(j)&=\inner{\potmu}{\varphi_j}
=\int_{\T^n} \potmu(x) e^{-2\pi i j \cdot x} \dd x\\
&=\int_{\T^n}\Big( \int_{\T^n}c_p(x-y) ~\dd \mu(y)\Big) e^{-2\pi i j \cdot x} ~\dd x\\
&=\int_{\T^n} \Big(\int_{\T^n}c_p(x-y) e^{-2\pi i j \cdot (x-y)}\dd x\Big) e^{-2\pi i j \cdot y}~\dd \mu(y)\\
&=\inner{c_p}{\varphi_j}\inner{\mu}{\varphi_j}=\hat{c}_p(j)\hat{\mu}(j).
\end{split}
\end{equation*}
In particular, if $\hat{c}_p(j)\neq 0$ for every $j \in \Z^n,$ then the measure can be recovered from the potential function. If $n=1$ and $p=2$ then the Fourier series of $c_2(x)=x^2$ does not vanish anywhere. More precisely, $\hat{c}_2(j)=\frac{(-1)^j}{2 j^2 \pi^2}$ for $j \neq 0$ and $\hat{c}_2(0)=\frac{1}{12}$. This means that $\potmu\equiv\potnu$ implies $\mu=\nu$ in this case. As we will see later, this implication automatically ensures that the Wasserstein space in question is isometrically rigid. 

Based on numerical computations, it seems that the Fourier transform $\hat{c}_p$ does not vanish anywhere if $n=1$ and $p>1.$ However, this is not the case for $n=1$ and $p=1.$ Indeed, $\hat{c}_1(j)=0$ for non-zero even $j$'s, $\hat{c}_1(0)=\frac{1}{4},$ and $\hat{c}_1(j)=-\frac{1}{j^2 \pi^2}$ for odd $j$'s. This does not mean that $\Wo \ler{\T}$ is not isometrically rigid, but we cannot prove it in such a simple way. For $n>1,$ the same holds true for $\Wt\ler{\T^n}$. The reason is that the summands of the quadratic cost function $c_2(x)=\sum_{k=1}^n x_k^2$ depend on only one variable, and hence $\hat{c}_2(j)=0$ whenever $j_k \neq 0$ for at least two indices. For example, for $n=2,$ we have  $\hat{c}_2 (0,0)=\frac{1}{6},$  $\hat{c}_2(0,j)=\hat{c}_2(j,0)=\frac{(-1)^j}{2 j^2 \pi^2}$ for  $j\neq 0$, and $\hat{c}_2(j_1,j_2)=0$ for $j_1, j_2 \neq 0$.\\

In what follows we develop a method that works for all $p\geq1$ and is suitable to prove that $\Wp(\Tn)$ and $\Wp(\Sn)$ are isometrically rigid. In fact, this method works in the $0<p<1$ case as well, but we decided to not include it in the main body. On the one hand, we have already proved in \cite{HIL} that $p$-Wasserstein spaces are all isometrically rigid if $0<p<1$, regardless of what the underlying space is. On the other hand, as the definition of the $p$-Wasserstein distance is slightly different in the $0<p<1$ case, we should add one more branch to all proofs, without any serious novelty.

\section{Isometric rigidity of the Wasserstein torus}

We start with a simple observation: the diameter of $\WpTd$ is $\sqrt{n}/2$ if $p\geq 1$, and this maximal distance is achieved if and only if the two measures are Dirac masses concentrated on antipodal points. This automatically implies that if $\Phi\in\isom(\Wp(\Tn)$ then the $\Phi$-image of a Dirac measure is again a Dirac measure, in fact, Dirac measures concentrated on antipodal points are mapped to Dirac measures which are concentrated on antipodal points. Since $\dwp(\delta_x,\delta_y)=\varrho(x,y)$, this implies that the map $\psi:\Tn\to\Tn$ defined by $\Phi(\delta_x)=\delta_{\psi(x)}$ is an isometry of $\Tn$. It is a known that any isometry $\psi$ of $\Tn$ can be written in the following form:
\begin{equation}\label{eq:TD-isom}
    \psi((x_1,x_2,\dots,x_n)) = (\varepsilon_1 x_{\sigma(1)},\varepsilon_2 x_{\sigma(2)},\dots,\varepsilon_n x_{\sigma(n)}) + (u_1,u_2,\dots,u_n)
\end{equation}
with a permutation $\sigma$, numbers $\varepsilon_1,\dots,\varepsilon_n\in\{-1,1\}$ and point $(u_1,u_2,\dots,u_n)\in\Tn$.

By the above observation, we have
$\potphimu(\psi(x)) = \potmu(x)$ for all $x\in\Sn$ which suggests that those properties of $\mu$ which are encoded in its potential function, will be carried over to $\Phi(\mu)$.

Before continuing, we need some new notations. Let $x\in\Tn$, $n\geq 2$, $j\in\{1,\dots,n\}$. We introduce the set $H(x,j):=\{(y_1,\dots,y_n)\in\T^n\colon y_j=x_j \}$, and we denote by $e^j$ the vector $(\delta_{j,1},\dots,\delta_{j,n})$, where $\delta_{j,j}=1$ and $\delta_{i,j}=0$ if $i\neq j$. The symbol $\check{x}_j\in\T^{n-1}$ stands for the point obtained by dropping the $j$th coordinate of $x\in\Tn$, and $\lambda_{n-1}$ denotes the normalised Haar measure of $\T^{n-1}$. We remark that $\T^{n-1}$ can be identified with $H(x,j)$ for any point $x\in\Tn$ and $j\in\{1,\dots,n\}$. For two points $x,y\in\Tn$ we denote by $B(x,y)$ the bisector of $x$ and $y$, i.e., $B(x,y)=\{z\in\Tn\,|\,\varrho(x,z)=\varrho(z,y)\}$. The following lemma is of key importance. We will use it later to estimate the measure of certain (carefully chosen) sets and points by means of the Wasserstein potential.

\begin{lemma}\label{lem:WpTd-pot-p}
Let $n\in\N$, $n\geq 2$, $p\geq 1$, $x\in\Tn$, $j\in\{1,\dots,n\}$, and $\mu\in\WpTd$. Then the following assertions hold:
\begin{itemize}
\item[(a)] If $p=1$, and $\varrho_{n-1}$ denotes the distance of $\mathbb{T}^{n-1}$ then
    \begin{align}\label{eq:WpTd-pot-p=1}
    &\lim_{s\to 0+}\frac{\potmuo(x+s\cdot e^j) - 2\potmuo(x) + \potmuo(x-s\cdot e^j)}{s}\nonumber \\
    & \hspace{2.5cm} =
    2\mu(\{x\}) - \int_{H(x+\frac{1}{2}e^j,j)} \ler{\frac{1}{4} + \varrho_{n-1}\ler{\check{x}_j,\check{y}_j}^2}^{-\frac{1}{2}} ~\dd\mu(y).
    \end{align}
    \item[(b)] If either $1<p<2$ or $p>2$, then
    \begin{align}\label{eq:WpTd-pot-p>1-not2}
    &\lim_{s\to 0+}\frac{\potmu(x+s\cdot e^j) - 2\potmu(x) + \potmu(x-s\cdot e^j)}{s} \nonumber \\ 
    & \hspace{2.5cm} = - p\int_{H(x+\frac{1}{2}e^j,j)} \ler{\frac{1}{4} + \varrho_{n-1}\ler{\check{x}_j,\check{y}_j}^2}^{\frac{p-2}{2}} ~\dd\mu(y).
    \end{align}
    \item[(c)] If $p=2$, then 
    \begin{align}\label{eq:WpTd-pot-p=2}
    &\lim_{s\to 0+}\frac{\potmut(x+s\cdot e^j) - 2\potmut(x) + \potmut(x-s\cdot e^j)}{s} =
    - 2\cdot\mu\ler{H\ler{x+\frac{1}{2}e^j,j}}.
    \end{align}
\end{itemize}
\end{lemma}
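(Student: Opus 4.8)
The plan is to recognise the left-hand side of each formula as the jump between the one-sided derivatives of the potential along the $j$-th coordinate circle. Fix $x$ and $j$, and set $P(s):=\potmu(x+s\cdot e^j)$ for $s$ near $0$. A direct rearrangement of difference quotients gives
\[
\frac{P(s)-2P(0)+P(-s)}{s}=\frac{P(s)-P(0)}{s}+\frac{P(-s)-P(0)}{s}\xrightarrow[s\to 0+]{}P'(0^+)-P'(0^-),
\]
provided the one-sided derivatives $P'(0^{\pm})$ exist. Thus it suffices to compute these one-sided derivatives and subtract, and the whole statement will reduce to identifying exactly which points $y$ contribute to the jump.

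First I would justify differentiating through the integral. Writing $f_y(s):=\varrho(x+s\cdot e^j,y)^p$, the map $s\mapsto f_y(s)$ is Lipschitz with a constant uniform in $y$: since $z\mapsto\varrho(z,y)$ is $1$-Lipschitz, $\varrho\leq\sqrt{n}/2$ on $\Tn$, and $t\mapsto t^p$ has Lipschitz constant at most $p(\sqrt{n}/2)^{p-1}$ on $[0,\sqrt{n}/2]$ when $p\geq 1$, we obtain $\abs{f_y(s)-f_y(s')}\leq p(\sqrt{n}/2)^{p-1}\abs{s-s'}$. Hence the difference quotients $\tfrac{f_y(s)-f_y(0)}{s}$ are bounded by the constant $p(\sqrt{n}/2)^{p-1}$, which is $\mu$-integrable as $\mu$ is a probability measure. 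Since each $f_y$ admits one-sided derivatives $f_y'(0^{\pm})$ at $0$ (verified case by case below), dominated convergence yields $P'(0^{\pm})=\int_{\Tn}f_y'(0^{\pm})\,\dd\mu(y)$, and therefore the limit in question equals $\int_{\Tn}\ler{f_y'(0^+)-f_y'(0^-)}\,\dd\mu(y)$.

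The heart of the argument is the pointwise computation of $f_y'(0^+)-f_y'(0^-)$. Because only the $j$-th coordinate moves, $\varrho(x+s\cdot e^j,y)^2=w(s)^2+R^2$, where $R^2:=\varrho_{n-1}(\check{x}_j,\check{y}_j)^2$ is independent of $s$ and $w(s):=\abs{(x_j+s-y_j)_{\text{mod }1}}$ is the circle distance, a $1$-Lipschitz sawtooth that is smooth except where $w=0$ or $w=\tfrac12$. If $w(0)\notin\lecurl{0,\tfrac12}$, then $f_y$ is smooth at $0$ (as $w(0)^2+R^2>0$) and there is no jump; these are all $y$ lying off the two hyperplanes $H(x,j)$ and $H(x+\tfrac12 e^j,j)$. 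For $y\in H(x,j)\setminus\lecurl{x}$ one has $w(0)=0$ but $R>0$, so $f_y(s)=(s^2+R^2)^{p/2}$ is smooth and again there is no jump. The only genuine contributions come from two sources: the atom at $y=x$, where $f_x(s)=\abs{s}^p$ gives $f_x'(0^+)-f_x'(0^-)=2$ if $p=1$ and $0$ if $p>1$; and the antipodal hyperplane $y\in H(x+\tfrac12 e^j,j)$, where the wraparound forces $w(s)=\tfrac12-\abs{s}$ near $0$, so $f_y(s)=\ler{(\tfrac12-\abs{s})^2+R^2}^{p/2}$ is even in $s$ and
\[
f_y'(0^+)-f_y'(0^-)=-p\ler{\tfrac14+R^2}^{\frac{p-2}{2}}.
\]

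Substituting these values and integrating against $\mu$ produces the three cases simultaneously: the antipodal-hyperplane term yields $-p\int_{H(x+\frac12 e^j,j)}\ler{\tfrac14+\varrho_{n-1}(\check{x}_j,\check{y}_j)^2}^{(p-2)/2}\,\dd\mu(y)$, to which the atomic term $2\mu(\lecurl{x})$ is added precisely when $p=1$ (note $x\notin H(x+\tfrac12 e^j,j)$, so no double counting), while for $p=2$ the exponent collapses to $0$ and the integral becomes $-2\mu\ler{H(x+\tfrac12 e^j,j)}$. I expect the main obstacle to be the careful bookkeeping of the non-smoothness of the torus metric --- both the conical singularity of $t\mapsto t^p$ at the origin, responsible for the $p=1$ atom, and the kink created by the modular wraparound at the antipodal hyperplane --- together with confirming that these are the only two sources of a derivative jump and that the uniform Lipschitz bound legitimately dominates the difference quotients.
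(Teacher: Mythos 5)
Your proposal is correct and follows essentially the same route as the paper: the same decomposition of $\Tn$ into the atom $\{x\}$, the antipodal hyperplane $H(x+\tfrac12 e^j,j)$, and the remaining (smooth) points, the same uniform Lipschitz bound feeding a dominated convergence argument, and the same elementary computation of the kink $-p\ler{\tfrac14+\varrho_{n-1}(\check{x}_j,\check{y}_j)^2}^{(p-2)/2}$ on the antipodal hyperplane. The only cosmetic difference is that you pass through the one-sided derivatives $P'(0^{\pm})$ and their jump, whereas the paper works directly with the symmetric second difference quotient (which vanishes pointwise at differentiability points); the content is identical.
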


\begin{proof}

Since $p\geq 1$, we have the following:
\begin{align*}
    &\lim_{s\to 0+}\frac{\potmu(x+s\cdot e^j) - 2\potmu(x) + \potmu(x-s\cdot e^j)}{s} \\
   = &\lim_{s\to 0+} \bigg( \int_{\Tn\setminus\{x\}\setminus H(x+\frac{1}{2}e^j,j)} \frac{\varrho(x+s\cdot e^j,y)^p - 2 \varrho(x,y)^p + \varrho(x-s\cdot e^j,y)^p}{s}~\dd\mu(y)   \\
    & \hspace{3cm} + 2\mu(\{x\})\frac{s^p}{s} + 2\int_{H(x+\frac{1}{2}e^j,j)} \frac{\varrho(x+s\cdot e^j,y)^p - \varrho(x,y)^p}{s}~\dd\mu(y) \bigg).
\end{align*}First we obtain that the above two integrands are bounded. 
On the one hand, since $s\mapsto \varrho(x+s\cdot e^j,y)^p$ is differentiable at $s=0$ if $y\in \Tn\setminus\{x\}\setminus H(x+\frac{1}{2}e^j,j)$, the first integral converges to $0$ by Lebesgue's dominant convergence theorem.
On the other hand, if $y\in H(x+\frac{1}{2}e^j,j)$, then by elementary calculus we obtain
\begin{align*}
    &\lim_{s\to 0+} \frac{\varrho(x+s\cdot e^j,y)^p - \varrho(x,y)^p}{s} 
    = \left.\frac{d}{ds}\ler{\ler{\ler{\frac{1}{2}-s}^2 + \sum_{\substack{k=1 \\ k\neq j}}^n(x_k-y_k)^2}^{p/2}}\right|_{s=0} \\
    &= -\frac{p}{2} \ler{\ler{\frac{1}{2}}^2 + \sum_{\substack{k=1 \\ k\neq j}}^n(x_k-y_k)^2}^{\frac{p-2}{2}}
    = -\frac{p}{2} \ler{\frac{1}{4} + \varrho_{n-1}\ler{\check{x}_j,\check{y}_j}^2}^{\frac{p-2}{2}}.
\end{align*}
An application of Lebesgue's dominant convergence theorem completes the proof.
\end{proof}

Now we are ready to state and prove the main theorem of this section. We assume that $n\geq2$, the $n=1$ case will be proved in Theorem \ref{thm:WpSg}.

\begin{theorem}\label{thm:WpTd}
Let $n\geq2$ and $p\geq1$. Then the $p$-Wasserstein torus $\Wp(\Tn)$ is isometrically rigid, that is, the push-forward operation $\#:\isom(\Tn)\to\isom(\Wp(\Tn))$ is surjective.
\end{theorem}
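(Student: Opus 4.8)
The plan is to start from an arbitrary $\Phi\in\isom(\Wp(\Tn))$ and strip off its trivial part. The observation preceding the theorem already produces an isometry $\psi\in\isom(\Tn)$ with $\Phi(\delta_x)=\delta_{\psi(x)}$. Since $\#$ is a group homomorphism, $\tilde\Phi:=(\psi^{-1})_\#\circ\Phi$ is again an isometry of $\Wp(\Tn)$, and it fixes every Dirac mass: $\tilde\Phi(\delta_x)=(\psi^{-1})_\#(\delta_{\psi(x)})=\delta_x$. It therefore suffices to prove $\tilde\Phi=\id$, for then $\Phi=\psi_\#$ is trivial and $\#$ is surjective. The reason fixing the Diracs helps is that, writing $\nu:=\tilde\Phi(\mu)$, one has $\potnu(x)=\dwpp(\tilde\Phi(\mu),\tilde\Phi(\delta_x))=\dwpp(\mu,\delta_x)=\potmu(x)$ for every $x$; that is, $\tilde\Phi$ preserves Wasserstein potentials. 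Thus the whole problem reduces to the recovery statement: \emph{if $\potmu=\potnu$ then $\mu=\nu$.}

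To extract $\mu$ from $\potmu$ I would feed the potential into Lemma \ref{lem:WpTd-pot-p}. Fix a coordinate direction $j$ and a slice position $c$, and let $\mu_c$ denote the restriction of $\mu$ to the antipodal hyperplane $H=\{y\colon y_j=c\}$, viewed as a measure on $\T^{n-1}$. As the remaining coordinates $\check x_j$ range over $\T^{n-1}$, the right-hand side of \eqref{eq:WpTd-pot-p>1-not2} is exactly the convolution on $\T^{n-1}$ of $\mu_c$ with the fixed radial kernel $\kappa_p(t)=\ler{\tfrac14+t^2}^{(p-2)/2}$. Equal potentials force these convolutions to agree for $\mu$ and $\nu$, so I would recover each slice $\mu_c$, and hence $\mu$, by Fourier inversion on $\T^{n-1}$, provided every Fourier coefficient $\hat\kappa_p(m)$, $m\in\Z^{n-1}$, is nonzero. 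In the case $p=1$ the formula \eqref{eq:WpTd-pot-p=1} returns $2\mu(\{x\})$ minus the $\kappa_1$-convolution of the opposite slice; running over all base points isolates the full atomic part of $\mu$ (the only contribution concentrated at a point) and, by the same deconvolution, the slices, hence $\mu$.

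The first technical point is the non-vanishing of $\hat\kappa_p$ on $\T^{n-1}$ for $p\neq 2$; this is the higher-dimensional analogue of the non-vanishing of $\hat c_p$ discussed in the introduction, and I expect it to require an honest Fourier estimate rather than a closed formula. The main obstacle, however, is the case $p=2$, $n\ge 2$, which is precisely why Lemma \ref{lem:WpTd-pot-p} records it separately in \eqref{eq:WpTd-pot-p=2}: there the kernel degenerates to the constant $\kappa_2\equiv 1$, the convolution collapses to the single number $\mu(H)$, and the potential retains \emph{only} the one-dimensional coordinate marginals of $\mu$. These marginals do not determine $\mu$ — for instance $\tfrac12\delta_{(0,0)}+\tfrac12\delta_{(1/2,1/2)}$ and $\tfrac12\delta_{(0,1/2)}+\tfrac12\delta_{(1/2,0)}$ share the same quadratic potential on $\T^2$ — so the bare recovery scheme fails and the quadratic case must be closed by a different device. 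My plan there is to exploit that $\tilde\Phi$ is an isometry, not merely a potential-preserving map: between antipodal Diracs the $\Wt$-geodesic is not unique, so their set of metric midpoints is rich and contains non-Dirac measures such as $\tfrac12\delta_{x+\frac14 e^j}+\tfrac12\delta_{x-\frac14 e^j}$. Since $\tilde\Phi$ fixes the antipodal Diracs it must preserve this midpoint set, and iterating this I would aim to show that $\tilde\Phi$ fixes a family of measures that \emph{is} distance-determining; as $\tilde\Phi$ preserves all distances, this forces $\tilde\Phi=\id$ and closes the $p=2$ case as well.
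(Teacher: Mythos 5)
Your opening reduction is exactly the paper's: strip off $\psi$ so that $\tilde\Phi:=(\psi^{-1})_\#\circ\Phi$ fixes every Dirac mass and hence preserves Wasserstein potentials, then try to pin measures down using Lemma \ref{lem:WpTd-pot-p}. The execution, however, has two genuine gaps. For $1<p$, $p\neq2$, your recovery scheme rests on the unproven claim that the kernel $\kappa_p(t)=\ler{\tfrac14+t^2}^{(p-2)/2}$ has non-vanishing Fourier coefficients on $\T^{n-1}$, so that each antipodal slice can be deconvolved. You flag this as a ``technical point,'' but it is precisely the kind of statement the paper's introduction identifies as the obstruction to the Fourier route: even for the cost $c_p$ on $\T$ the authors could only check non-vanishing numerically for $p>1$, and Section 3 is designed to avoid any such input. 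The paper's actual argument needs no injectivity of the convolution operator: it tests the second-difference formula only against finitely supported measures in generic position (case (b): atoms with pairwise distinct first coordinates), so the slice integral for $\mu$ localizes at the single atom $x^k$; because $t\mapsto\ler{\tfrac14+t^2}^{(p-2)/2}$ attains its extremum exactly at $t=0$, the potential identity becomes the inequality $w_k\leq\Phi(\mu)\ler{H(x^k,1)}$, with equality forcing the mass of $\Phi(\mu)$ on $H(x^k,1)$ to sit at $x^k$; summing over $k$ against total mass $1$ then gives $\Phi(\mu)=\mu$. (Your $p=1$ step can be closed the same way: for finitely supported $\mu$ in generic position the atomic term in \eqref{eq:WpTd-pot-p=1} alone determines $\Phi(\mu)$, so the slice deconvolution you invoke there is both unproven and unnecessary.)

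The second, more serious gap is the case $p=2$, which you correctly isolate as the one where potentials genuinely fail to determine measures (your marginal counterexample is right), but for which you offer only a plan. Saying that $\tilde\Phi$ permutes the midpoint set of coordinate-antipodal Diracs is true but weak: an isometry preserves that set \emph{as a set}, and the entire difficulty is to show that specific non-Dirac measures are \emph{fixed}; ``iterating this I would aim to show'' that a fixed distance-determining family exists is the statement of the problem, not a solution to it. The paper closes this case with a multi-step argument that your sketch does not replace: preservation of one-dimensional marginals \eqref{eq:marginals}; restriction of $\Phi$ to a sub-cube $\cube$ isometric to the Euclidean cube $[0,1/2]^n$; preservation of centre of mass and standard deviation, which makes \cite[Lemma 6.2]{Kloeckner-2010} (preservation of perpendicularity of supports) applicable; a case distinction between $n\geq3$ and $n=2$ to fix all measures supported on two points; and finally a bisector argument, based on the characterization of the distance from $\eta$ to the segment $\{a\delta_x+(1-a)\delta_y\colon 0\leq a\leq1\}$, to fix all finitely supported measures. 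As it stands, your proposal proves nothing in the quadratic case, and the non-quadratic cases depend on a Fourier non-vanishing statement you have not established.
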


\begin{proof}
Assume that $\Phi\colon\WpTd\to\WpTd$ is an isometry. We have to show that there exists an isometry $\psi:\Tn\to\Tn$ such that
\begin{equation}\label{eq:WpTd}
	\Phi(\mu) = \psi_\#\mu \qquad (\mu\in\WpSg).
\end{equation}

We already know that \eqref{eq:WpTd} holds for Dirac masses with some $\psi\in\isom(\Tn)$, which implies that $\ler{\psi^{-1}}_\#\circ\Phi$ fixes all Dirac measures. Since $\Phi=\psi_\#$ if and only if $\ler{\psi^{-1}}_\#\circ\Phi$ is the identity of $\Wp(\Tn)$, we can assume without loss of generality that $\Phi$ itself fixes all Dirac measures, and our task now is to prove that $\Phi(\mu)=\mu$ for all $\mu\in\Wp(\Tn)$. In fact, since $\Phi$ is continuous, it is enough to show that $\Phi$ fixes a dense subset of probability measures. We consider three different cases, corresponding to Lemma \ref{lem:WpTd-pot-p}.\\

\emph{Case (a) -- When $p=1$ holds.}
According to the density argument above, it suffices to prove that measures of the following form are fixed by $\Phi$: 
$\mu = \sum_{k=1}^N w_k\delta_{x^k} \in\WoTd$ where $N\in\N$, $\sum_{k=1}^N w_k = 1$, $w_k\geq 0$ for all $k$, and $x^1,x^2,\dots,x^N$ are pair-wise different points such that for all $k\in\{1,2,\dots,N\}$ we have 
$$ 
\{x^1,x^2,\dots,x^N\} \cap \ler{\bigcup_{j=1}^n H\ler{x^k+\frac{1}{2}e^j,j}} = \emptyset. 
$$
For such a measure we have the following for every $k$:
\begin{align*}
w_k &= \mu(\{x^k\}) = \lim_{s\to 0+}\frac{\potmu(x^k+s\cdot e^j) - 2\potmu(x^k) + \potmu(x^k-s\cdot e^j)}{2s} \\ 
& = \lim_{s\to 0+}\frac{\potphimu(x^k+s\cdot e^j) - 2\potphimu(x^k) + \potphimu(x^k-s\cdot e^j)}{2s} \leq \Phi(\mu)(\{x^k\}),
\end{align*}
where we used \eqref{eq:WpTd-pot-p=1} in the last step.
Since $1 = \sum_{k=1}^N \mu(\{x^k\}) \leq \sum_{k=1}^N \Phi(\mu)(\{x^k\}) \leq 1$, we obtain that $\Phi(\mu) = \mu$, which completes the proof of the present case.\\

\emph{Case (b) -- When either $1<p<2$ or $p>2$ holds.}
Assume first that $1<p<2$. By a density argument we see that it suffices to show that $\Phi$ fixes every finitely supported measure $\mu = \sum_{k=1}^N w_k\delta_{x^k} \in\WoTd$ where $x^1,x^2,\dots,x^N\in\Tn$ are points whose first coordinates are pairwise different. By Lemma \ref{lem:WpTd-pot-p} we have for all $k$ that 
\begin{align*}
    w_k &= \mu(\{x^k\}) = 4^{\frac{p-2}{2}}\int_{H(x^k,1)} \ler{\frac{1}{4} + \varrho_{n-1}\ler{\check{x}_1,\check{y}_1}^2}^{\frac{p-2}{2}} ~\dd\mu(y) \\ 
    & = \lim_{s\to 0+}\frac{\potmu(x^k+\frac{1}{2}e^1+s\cdot e^1) - 2\potmu(x^k+\frac{1}{2}e^1) + \potmu(x^k+\frac{1}{2}e^1-s\cdot e^1)}{-p\cdot4^{\frac{2-p}{2}}\cdot s} \\ 
    & = \lim_{s\to 0+}\frac{\potphimu(x^k+\frac{1}{2}e^1+s\cdot e^1) - 2\potphimu(x^k+\frac{1}{2}e^1) + \potphimu(x^k+\frac{1}{2}e^1-s\cdot e^1)}{-p\cdot4^{\frac{2-p}{2}}\cdot s} \\
    & = 4^{\frac{p-2}{2}}\int_{H(x^k,1)} \ler{\frac{1}{4} + \varrho_{n-1}\ler{\check{x}_1,\check{y}_1}^2}^{\frac{p-2}{2}} d\Phi(\mu)(y) \leq \Phi(\mu)(H(x^k,1))
\end{align*}
where we have equation if and only if $\Phi(\mu)(H(x^k,1)) = \Phi(\mu)(\{x^k\})$. As 
$$1 = \sum_{k=1}^N w_k \leq \sum_{k=1}^N \Phi(\mu)(H(x^k,1))\leq 1,$$
we must have $\Phi(\mu)(H(x^k,1)) = \Phi(\mu)(\{x^k\})$, and hence $\Phi(\mu) = \mu$. The very same argument works if $p>2$.\\

\emph{Case (c) -- When $p=2$ holds.}
First notice that by Lemma \ref{lem:WpTd-pot-p} we have 
\begin{equation}\label{eq:hyperplane}
    \Phi(\mu)\ler{H(z,j)} = \mu\ler{H(z,j)} \qquad (z\in\Tn, \; j=1,\dots,n),
\end{equation}
which implies that $\Phi$ preserves the one-dimensional marginals. That is,
\begin{equation}\label{eq:marginals}
    \ler{\proj_j}_\#\Phi(\mu) = \ler{\proj_j}_\#\mu \qquad (j\in\{1,\dots,n\}, \mu\in\WtTd),
\end{equation}
where $\proj_j\colon \T^n\to\T$, $\proj_j(x)=x_j$ is the projection map. Indeed, \eqref{eq:hyperplane} implies this if $\mu$ is supported on a finite set, and we obtain \eqref{eq:marginals} for general measures by a simple continuity argument.

We claim that measures supported on two points are left invariant by $\Phi$. Let us consider a measure $\mu = \alpha\delta_x+(1-\alpha)\delta_y$ where $x\neq y$ and $0<\alpha<1$. Without loss of generality we can assume that the representing vectors' coordinates satisfy the inequalities $-1/2\leq y_j-x_j<1/2$ for all $j$. Consider the following subset of $\Tn$:
$$
\cube := \big\{u\in\Tn\,|\,\forall j\in\{1,\dots,n\}:\, 0\leq \epsilon_j\cdot(u_j - x_j) \leq 1/2\big\},
$$
where for each $j$ we choose $\epsilon_j = 1$ if $x_j\leq y_j$, and $\epsilon_j = -1$ if $x_j>y_j$. By definition, $x,y\in\cube$. Note that by \eqref{eq:marginals}, we have \begin{equation}\label{eq:supp-phimu}
    S(\Phi(\mu))\subset\big\{z\in\Tn\;|\; \forall \;j\in\{1,\dots,n\}:\, z_j\in\{x_j,y_j\}\big\} \subset\cube.
\end{equation}
Furthermore, the subset $\cube$ as a metric space is isometrically isomorphic to the cube $[0,1/2]^n$ equipped with the usual Euclidean distance. Therefore the set of all probability measures supported on $\cube$, which we denote by $\Wt(\cube)$, can be considered as a subset of $\Wt(\R^n)$. 

Note that \eqref{eq:marginals} implies that $\Phi$ maps $\Wt(\cube)$ onto itself. Denote by $\Phi_\cube$ the restricted isometry $\Phi|_{\Wt(\cube)}\colon\Wt(\cube)\to\Wt(\cube)$. Define the centre of mass of any $\mu\in\Wt(\cube)$ as the unique point $m(\mu)\in\cube$ such that $\dwt\ler{\delta_{m(\mu)},\mu} = \dwt\ler{\{\delta_z\colon z\in\cube\},\mu}$, and the standard deviation of $\mu$ as the distance $\sigma(\mu) := \dwt\ler{\delta_{m(\mu)},\mu}\in[0,\infty)$. Since $\Phi_\cube$ leaves every Dirac mass invariant, we obtain that $\Phi_\cube$ preserves the centre of mass an standard deviation of measures, that is,
$$m(\Phi_\cube(\mu)) = m(\mu),\;\;\; \sigma(\Phi_\cube(\mu)) = \sigma(\mu)\qquad (\mu\in\Wt(\cube)).$$
Therefore, according to \cite[Lemma 6.2]{Kloeckner-2010}, for all $\mu,\nu\in\Wt(\cube)$ the following equivalence holds: $\langle v_1-v_2, w_1-w_2\rangle = 0$ for all $v_1,v_2\in S(\mu)$, $w_1,w_2\in S(\mu)$ if and only if the same holds for all $v_1,v_2\in S(\Phi_\cube(\mu))$, $w_1,w_2\in S(\Phi_\cube(\mu))$.

On the one hand, if $n\geq 3$ and $x_j\neq y_j$ for all $j\in\{1,\dots,n\}$, then there exists a $\nu := \alpha\delta_z+(1-\alpha)\delta_u\in\Wt(\cube)$ ($z\neq u$), such that the points $x,y,z,u\in\cube$ satisfy the following conditions:
$\langle x-y,z-u\rangle = 0$, but $\langle \xi^1-\xi^2, \zeta^1-\zeta^2 \rangle \neq 0$ holds for all other 
$$\xi^1,\xi^2\in\big\{\xi\in[0,1/2]^n\,|\, \forall\;j\in\{1,\dots,n\}:\, \xi_j\in\{x_j,y_j\}\big\},~~~\xi^1\neq\xi^2$$ and $$\zeta^1,\zeta^2\in\big\{\zeta\in[0,1/2]^n\,|\, \forall\;j\in\{1,\dots,n\}:\, \zeta_j\in\{z_j,u_j\}\big\}, ~~~\zeta^1\neq\zeta^2.$$ 
Note that then
\begin{equation}\label{eq:supp-phinu}
    S(\Phi(\nu))\subseteq\big\{v\in\Tn\,|\, \forall \;j\in\{1\dots,n\}:\, v_j\in\{z_j,u_j\}\big\} \subset\cube.
\end{equation}
Since the supports of $\mu$ and $\nu$ are perpendicular to each other, so must be the supports of their images. However, by our assumptions and \eqref{eq:supp-phimu}--\eqref{eq:supp-phinu} imply $\Phi(\mu) = \mu$, $\Phi(\nu) = \nu$. This, together with a simple continuity argument proves that indeed $\Phi$ leaves every measure fixed that are supported on at most two points, provided that $n\geq 3$.

On the other hand, if $n=2$, then consider a $\nu := \alpha\delta_{z}+(1-\alpha)\delta_{u}\in\Wt(\cube)$ such that $\langle x-y, z-u \rangle = 0$. Elementary geometric observation then gives that either $S(\Phi(\mu)) = \{(x_1,x_2),(y_1,y_2)\}$ and $S(\Phi(\nu)) = \{(z_1,z_2),(u_1,u_2)\}$, or $S(\Phi(\mu)) = \{(x_1,y_2),(y_1,x_2)\}$ and $S(\Phi(\nu)) = \{(z_1,u_2),(u_1,z_2)\}$. By \eqref{eq:marginals}, the latter cannot happen unless $\alpha=1/2$. Therefore, by continuity, $\Phi$ leaves every measure fixed that are supported on at most two points, also in this case.

From here it suffices to show that any finitely supported measure is left fixed by $\Phi$. Consider a $\mu = \sum_{k=1}^N w_k\delta_{x^k} \in\WoTd$ where $N\in\N$ and $x^1,x^2,\dots,x^N$ are pair-wise different points. Define the finite set
$$
F:=\big\{\xi\in\Tn\,|\, \forall\;j\in\{1,\dots,n\}:\, \xi_j\in\{x^1_j,x^2_j,\dots,x^N_j\}\big\}.
$$
By \eqref{eq:marginals}, we have $S(\Phi(\mu))\subseteq F$. Consider an arbitrary element $u\in F$ and observe that there exists two points $x,y\in\Tn$ such that $B(x,y)\cap F = \{u\}$. Note that since $\mu(B(x,y)) = \mu(\{u\})$, it is enough to show that $\Phi(\mu)(B(x,y)) = \mu(B(x,y))$. The latter is a consequence of the following equivalence which holds for all $\eta\in\WpTd$, $\alpha\in[0,1]$:
\begin{align*}
    &\dwt\ler{\eta,\{a\delta_x+(1-a)\delta_y\colon0\leq a\leq 1\}} = \dwt\ler{\eta,\alpha\delta_x+(1-\alpha)\delta_y} \\
    &\iff \mu\ler{\{z\colon \varrho(x,z) < \varrho(y,z)\}} \leq \alpha \leq \mu\ler{\{z\colon \varrho(x,z) \geq \varrho(y,z)\}}.
\end{align*}
This concludes the proof.
\end{proof}

\section{Isometric rigidity of the Wasserstein sphere} Similarly to the case of the torus, we prove first that if $\Phi$ is an isometry of $\WpSg$ then there exists an isometry $\psi\colon \Sn\to \Sn$ such that
$$
\Phi(\delta_x) = \psi_\#\delta_x = \delta_{\psi(x)} \quad (x\in\Sn).
$$
The diameter of $\WpSg$ is $\pi$ and this maximal distance is achieved if and only if the two measures in question are Dirac masses concentrated on antipodal points. This property must be preserved by isometries, and therefore the image of every Dirac measure is a Dirac measure again. Furthermore, since $\sphericalangle(x,y)=\dwp(\delta_x,\delta_y)$, the map $\psi$ defined via $\Phi(\delta_x):=\delta_{\psi(x)}$ is an isometry. Note that every $\psi\in\isom(\Sn)$ is the restriction of  an orthogonal transformation of the underlying space. Again, we are going to use the Wasserstein potential of the measure $\mu\in\WpSg$
\begin{equation}\label{eq:WpSg-pot}
	\potmu\colon \Sn\to \R, \;\; x\mapsto \dwpp(\mu,\delta_x)=
    \int_{\Sn} \sphericalangle(x,y)^p ~\dd\mu(y). 
\end{equation}
Since $\Phi(\delta_x) = \delta_{\psi(x)}$ for all $x\in\Sn$, we have $\potphimu\circ\psi=\potmu$. Indeed, 
\begin{equation}\label{eq:WpSg-phipot}
    \potphimu(\psi(x))=\dwpp(\Phi(\mu),\delta_{\psi(x)})=\dwpp(\Phi(\mu),\Phi(\delta_x) = \dwpp(\mu,\delta_x)=\potmu(x)
\end{equation}
for all $x\in\Sn$. The following is an analogue of Lemma \ref{lem:WpTd-pot-p}.
\begin{lemma}\label{lem:WpSg-pot-p>=1}
Let $n\in\N$, $p\geq 1$, $x,z\in\Sn$, $\sphericalangle(x,z) = \pi/2$, and $\mu\in\WpSg$. We have
\begin{align}\label{eq:WpSg-pot-p>=1}
    &\lim_{s\to 0+}\frac{\potmu(\cos s \cdot x + \sin s \cdot z) - 2\potmu(x) + \potmu(\cos s \cdot x - \sin s \cdot z)}{s}\nonumber \\
    & \hspace{1.5cm} = \left\{\begin{array}{cc}
    -2 \cdot \mu(\{-x\}) + 2 \cdot \mu(\{x\}), & \text{if}\; p=1 \\
    -2p\pi^{p-1} \cdot \mu(\{-x\}), & \text{if}\; p>1 
\end{array}\right..
\end{align}
\end{lemma}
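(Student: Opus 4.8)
The plan is to follow the strategy of Lemma~\ref{lem:WpTd-pot-p}, taking advantage of the fact that on the sphere the cut locus of $x$ is the single antipodal point $-x$, rather than a whole hypersurface. Writing $x_s^{\pm} := \cos s\cdot x \pm \sin s\cdot z$ and recalling that $\potmu(w) = \int_{\Sn}\sphericalangle(w,y)^p~\dd\mu(y)$, I would first express the second difference quotient as
\[
\int_{\Sn} \frac{\sphericalangle(x_s^+,y)^p - 2\sphericalangle(x,y)^p + \sphericalangle(x_s^-,y)^p}{s}~\dd\mu(y),
\]
and split the domain of integration into the three disjoint pieces $\{x\}$, $\{-x\}$, and $\Sn\setminus\{x,-x\}$. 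The point is that $s\mapsto \sphericalangle(x_s^{\pm},y)^p$ is differentiable at $s=0$ for every $y\neq\pm x$, while the two exceptional points $y=x$ and $y=-x$ produce the atomic terms on the right-hand side.

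Next I would evaluate the two atomic contributions directly. Since $z\perp x$ we have $\inner{x_s^{\pm}}{x}=\cos s$, hence $\sphericalangle(x_s^{\pm},x)=s$ for small $s>0$, so the $y=x$ piece equals $\tfrac{2s^p}{s}\mu(\{x\})=2s^{p-1}\mu(\{x\})$, which tends to $2\mu(\{x\})$ when $p=1$ and to $0$ when $p>1$. Similarly $\inner{x_s^{\pm}}{-x}=-\cos s$ gives $\sphericalangle(x_s^{\pm},-x)=\pi-s$, so the $y=-x$ piece equals $\tfrac{2\ler{(\pi-s)^p-\pi^p}}{s}\mu(\{-x\})$, whose limit is $-2p\pi^{p-1}\mu(\{-x\})$. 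These two limits already assemble into the claimed expression, in both the $p=1$ and the $p>1$ regimes.

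It then remains to show that the integral over $\Sn\setminus\{x,-x\}$ tends to $0$, and for this I would invoke Lebesgue's dominated convergence theorem. For the pointwise limit, fix $y\neq\pm x$: then $\inner{x}{y}\in(-1,1)$, so $\inner{x_s^{\pm}}{y}$ stays in $(-1,1)$ and the distance stays in a compact subinterval of $(0,\pi)$ for $s$ near $0$, making $s\mapsto\sphericalangle(x_s^{\pm},y)^p$ smooth at the origin; consequently the symmetric second difference is $O(s^2)$ and its quotient by $s$ vanishes. For the majorant, the crucial estimate is the triangle inequality $\abs{\sphericalangle(x_s^{\pm},y)-\sphericalangle(x,y)}\leq\sphericalangle(x_s^{\pm},x)=s$, which combined with the mean value theorem and the bound $\sphericalangle\leq\pi$ gives $\abs{\sphericalangle(x_s^{\pm},y)^p-\sphericalangle(x,y)^p}\leq p\pi^{p-1}s$; thus the integrand is dominated by the constant $2p\pi^{p-1}$, which is $\mu$-integrable.

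I expect the only delicate step to be producing this uniform integrable majorant, since the naive bound coming from the second derivative of $s\mapsto\sphericalangle(x_s^{\pm},y)^p$ degenerates like $\sphericalangle(x,y)^{p-2}$ as $y\to\pm x$ when $1<p<2$, and hence is not integrable near the singularities. Reducing the central integrand to the constant bound $2p\pi^{p-1}$ via the triangle inequality is what sidesteps this difficulty; once dominated convergence is justified, collecting the three limits completes the proof.
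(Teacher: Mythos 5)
Your proposal is correct and follows essentially the same route as the paper's proof: the same decomposition into the atoms $\{x\}$, $\{-x\}$ and the rest of the sphere, the same direct evaluation of the two atomic contributions via $\sphericalangle(x_s^{\pm},x)=s$ and $\sphericalangle(x_s^{\pm},-x)=\pi-s$, and the same dominated-convergence argument with the majorant obtained from the Lipschitz bound on $t\mapsto t^p$ over $[0,\pi]$ combined with the triangle inequality $\abs{\sphericalangle(x_s^{\pm},y)-\sphericalangle(x,y)}\leq s$. The only cosmetic difference is that for the pointwise limit you invoke smoothness and an $O(s^2)$ bound on the symmetric second difference, where the paper only needs differentiability of $s\mapsto\sphericalangle(\cos s\cdot x+\sin s\cdot z,y)^p$ at $s=0$; both are valid.
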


\begin{proof}
Since the left hand-side of \eqref{eq:WpSg-pot-p>=1} is
\begin{align*}
    & \lim_{s\to0+} \bigg( \mu(\{x\})\frac{2\cdot s^p}{s} + \mu(\{-x\})\frac{2\cdot(\pi-s)^p - 2\cdot\pi^p}{s} \\
    &\hspace{0.3cm}+ \int_{\Sn\setminus\{-x,x\}} \frac{\sphericalangle(\cos s \cdot x + \sin s \cdot z,y)^p - 2\sphericalangle(x,y)^p + \sphericalangle(\cos s \cdot x - \sin s \cdot z,y)^p}{s} ~\dd\mu(y) \bigg),
\end{align*}
it suffices to show that the limit of the above integral is zero. Note that the function $t\mapsto t^p$ is Lipschitz on the interval $[0,\pi]$ with a constant, say, $K>0$. Hence the integrand is bounded, as can be seen by the following estimation (we use the triangle inequality in the last step)
\begin{align*}
    &\left| \frac{\sphericalangle(\cos s \cdot x \pm \sin s \cdot z,y)^p - \sphericalangle(x,y)^p}{s} \right| \leq K\cdot\left| \frac{\sphericalangle(\cos s \cdot x \pm \sin s \cdot z,y) - \sphericalangle(x,y)}{s} \right| \\
    & = K\cdot\left| \frac{\sphericalangle(\cos s \cdot x \pm \sin s \cdot z,y) - \sphericalangle(x,y)}{\sphericalangle(\cos s \cdot x \pm \sin s \cdot z,x)} \right| \leq K.
\end{align*}
Observe that for all $y\in\Sn\setminus\{-x,x\}$ the function 
\begin{equation}\label{eq:diffhato}
    s\mapsto \sphericalangle(\cos s \cdot x + \sin s \cdot z,y)^p = \arccos^p\inner{\cos s \cdot x + \sin s \cdot z}{y}
\end{equation}
is differentiable at $s=0$. Hence the point-wise limit of the integrand is the constant $0$ function. Applying the Lebesgue dominant convergence theorem finishes the proof.
\end{proof}

Using Lemma \ref{lem:WpSg-pot-p>=1} we can prove the main result of this section, namely that the $p$-Wasserstein sphere is isometrically rigid for all $p\geq1$. Since $\T$ can be identified with $\mathbb{S}^1$, this theorem completes the case of the torus as well.
\begin{theorem}\label{thm:WpSg}
Let $n\geq1$ and $p\geq1$. Then the $p$-Wasserstein sphere $\Wp(\Sn)$ is isometrically rigid, that is, the push-forward operation: $\#:\isom(\Sn)\to\isom(\Wp(\Sn))$ is surjective.
\end{theorem}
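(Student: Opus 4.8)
The plan is to mirror the strategy of Theorem~\ref{thm:WpTd}, but using the cleaner recovery formula of Lemma~\ref{lem:WpSg-pot-p>=1}. First I would perform the standard reduction: given an isometry $\Phi\colon\WpSg\to\WpSg$, the discussion preceding the theorem produces a $\psi\in\isom(\Sn)$ with $\Phi(\delta_x)=\delta_{\psi(x)}$, so that $\ler{\psi^{-1}}_\#\circ\Phi$ is again an isometry which now fixes every Dirac mass. Since $\Phi=\psi_\#$ holds precisely when $\ler{\psi^{-1}}_\#\circ\Phi=\id$, I may assume without loss of generality that $\Phi$ itself fixes all Diracs. Then $\psi=\id$, and \eqref{eq:WpSg-phipot} collapses to the crucial identity $\potphimu=\potmu$ for every $\mu\in\WpSg$. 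Because $\Phi$ is continuous and finitely supported measures are dense, it then suffices to prove $\Phi(\mu)=\mu$ on a dense family of finitely supported measures.

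Next I would extract atomic information by applying Lemma~\ref{lem:WpSg-pot-p>=1} simultaneously to $\mu$ and to $\Phi(\mu)$, equating the two second-difference limits through $\potmu=\potphimu$. For $p>1$ this is immediate and decisive: for every $y\in\Sn$, choosing $x=-y$ and any $z\in\Sn$ with $\sphericalangle(x,z)=\pi/2$, the lemma gives $-2p\pi^{p-1}\mu(\{y\})=-2p\pi^{p-1}\Phi(\mu)(\{y\})$, hence $\mu(\{y\})=\Phi(\mu)(\{y\})$ for \emph{all} $y$. Thus $\Phi(\mu)$ has exactly the same atoms, with the same weights, as $\mu$; when $\mu$ is finitely supported its atoms carry total mass $1$, so $\Phi(\mu)$ is forced to place its entire mass on the support of $\mu$, giving $\Phi(\mu)=\mu$. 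Density then yields $\Phi=\id$, and no restriction on the support is needed in this regime.

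The case $p=1$ is where I expect the main obstacle, since there the lemma recovers only the \emph{antipodal difference} $\mu(\{x\})-\mu(\{-x\})$, and the resulting identity $\mu(\{x\})-\mu(\{-x\})=\Phi(\mu)(\{x\})-\Phi(\mu)(\{-x\})$ is symmetric under $x\mapsto-x$ and hence insufficient on its own. To break this ambiguity I would restrict to the family of finitely supported measures $\mu=\sum_{k=1}^N w_k\delta_{x^k}$ whose support contains no antipodal pair; this family is still dense, because any finitely supported measure can be approximated in $\dwp$ by a small generic perturbation of its atoms that destroys all antipodal coincidences. For such a $\mu$ one has $\mu(\{-x^k\})=0$, so the identity becomes $w_k=\Phi(\mu)(\{x^k\})-\Phi(\mu)(\{-x^k\})\leq\Phi(\mu)(\{x^k\})$. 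Summing over $k$ gives $1=\sum_{k=1}^N w_k\leq\sum_{k=1}^N\Phi(\mu)(\{x^k\})\leq 1$, forcing equality throughout: $\Phi(\mu)(\{x^k\})=w_k$ for every $k$, so $\Phi(\mu)$ concentrates its full mass on $\{x^1,\dots,x^N\}$ and therefore $\Phi(\mu)=\mu$. Continuity on this dense family then completes the proof for $p=1$ as well. Since $\T$ is identified with $\mathbb{S}^1$, the argument also settles the $n=1$ torus left open in Theorem~\ref{thm:WpTd}.
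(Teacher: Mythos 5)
Your proposal is correct and follows essentially the same route as the paper: recover atoms (for $p>1$) or antipodal differences (for $p=1$) from the second-difference limits of Lemma~\ref{lem:WpSg-pot-p>=1}, restrict to finitely supported measures with no antipodal pairs, and close the $p=1$ case with the same summing argument forcing $\Phi(\mu)(\{-x^k\})=0$. The only cosmetic difference is that you normalize $\psi=\id$ at the outset (as in the torus proof), while the paper keeps $\psi$ explicit and uses its linearity to transport the great-circle second differences, which is equivalent.
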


\begin{proof}
Assume that $\Phi\colon\WpSg\to\WpSg$ is an isometry. We have to show that there exists an isometry $\psi\in\isom(\Sn)$ such that
\begin{equation}\label{eq:WpSg}
	\Phi(\mu) = \psi_\#\mu \qquad (\mu\in\WpSg).
\end{equation}
We know that \eqref{eq:WpSg} holds for all Dirac masses with some isometry $\psi$. It suffices to show that \eqref{eq:WpSg} holds also for measures whose support is a finite set not containing any pair of antipodal points, as these form a dense subset of $\WpSg$. Consider a measure $\mu\in\WpSg$ with such properties, say, $\mu = \sum_{j=1}^N w_j\delta_{x_j}$ with $N\in\N$, $\sum_{j=1}^N w_j=1$, $\{x_1,\dots,x_N\}\cap\{-x_1,\dots,-x_N\}=\emptyset$ and $x_1,\dots,x_N$ pair-wise different. From here we distinguish between three cases.

First, if $p> 1$, then by Lemma \ref{lem:WpSg-pot-p>=1} we infer the following for all $j\in\{1,\dots,N\}$:
\begin{align*}
    w_j &= \mu(\{x_j\})= \lim_{s\to 0+}\frac{1}{-2p\pi^{p-1}s} \bigg(\potmu(\cos s \cdot (-x_j) + \sin s \cdot z_j) \nonumber\\
    &\hspace{6.1cm} - 2\potmu(-x_j) + \potmu(\cos s \cdot (-x_j) - \sin s \cdot z_j)\bigg) \nonumber\\
    &= \lim_{s\to 0+}\frac{1}{-2p\pi^{p-1}s} \bigg(\potphimu(\cos s \cdot (-\psi(x_j)) + \sin s \cdot \psi(z_j)) - 2\potphimu(-\psi(x_j)) \nonumber\\
    &\hspace{6.1cm}+ \potphimu(\cos s \cdot (-\psi(x_j)) - \sin s \cdot \psi(z_j))\bigg) \nonumber\\
    &= \Phi(\mu)(\{\psi(x_j)\})
\end{align*}
where we used \eqref{eq:WpSg-phipot} and that $\psi$ is the restriction of a linear isometry of the underlying real Hilbert space. This implies \eqref{eq:WpSg} for $\mu$ and completes the proof of this case.

Second, if $p=1$, then using the same calculation as above, we arrive at 
$$ w_j = \mu(\{x_j\}) = \Phi(\mu)(\{\psi(x_j)\}) - \Phi(\mu)(\{-\psi(x_j)\}).$$
Since $1 = \sum_{j=1}^N w_j \leq \sum_{j=1}^N \Phi(\mu)(\{\psi(x_j)\}) \leq 1$, we must have $\Phi(\mu)(\{-\psi(x_j)\}) = 0$ for all $j$, hence this case is done too.
\end{proof}

\end{document}